\newtheorem{theorem}{Theorem}
\title{\LARGE \bf
Finite-Horizon Markov Decision Processes\\ with State Constraints
}
\author{{Mahmoud El Chamie $\ $}
and
{$\, $ Beh\c{c}et A\c{c}{\i}kme\c{s}e 
\thanks{$^{*}$ The authors are with the University of Texas at Austin, department of Aerospace Engineering and Engineering Mechanics, 210 E. 24th St., Austin, TX 78712 USA. Emails: {\em melchami@utexas.edu} and {\em behcet@austin.utexas.edu}}}
}
\begin{document}

\maketitle
\thispagestyle{empty}
\pagestyle{empty}

\begin{abstract}
Markov Decision Processes (MDPs) have been used to formulate many decision-making problems   in science and engineering. The objective is to synthesize  the best decision (action selection) policies to maximize expected rewards (minimize costs) in a given  stochastic dynamical environment. In many practical scenarios (multi-agent systems, telecommunication, queuing, etc.), the decision-making problem can have  state constraints that must be satisfied, which leads to Constrained MDP (CMDP) problems. In the presence of such state  constraints, the optimal policies can be very hard to characterize.  This paper introduces  a new approach for finding non-stationary randomized policies for finite-horizon CMDPs. An efficient algorithm based on Linear Programming (LP) and duality theory is proposed, which  gives the convex set of all feasible policies and   ensures that  the expected total reward is above a computable lower-bound. The resulting decision policy is a randomized policy, which is  the projection of the unconstrained deterministic MDP policy on this convex set. To the best of  our knowledge, this is the first  result in state  constrained MDPs to give an efficient algorithm for generating finite horizon randomized policies for CMDP with optimality guarantees. A simulation example of a swarm of autonomous agents running MDPs is also presented to demonstrate the proposed CMDP solution algorithm. 
\end{abstract}

\section{Introduction}

Markov Decision Processes (MDPs) have been used to formulate many decision-making problems  in a variety of  areas of science and engineering \cite{Parkes:2003Anm,Dolgov:2006Res,Doshi:2004Dyn}. MDPs can also be  useful    in modeling decision-making problems for stochastic  dynamical systems where the dynamics cannot be fully captured by using first principle formulations. MDP models can be constructed by utilizing the available measured  data, which allows construction of the state transition probabilities. Hence MDPs play a critical role in big-data analytics. Indeed  very popular   methods of machine learning  such as reinforcement learning and its variants  \cite{sutton1998introduction}\cite{szepesvari2010} are built on the MDP framework.    With the increased interest and efforts in Cyber-Physical Systems (CPS), there is  even more interest in MDPs to facilitate rigorous construction of innovative hierarchical decision-making architectures, where MDP framework can integrate physics-based models with data-driven models.  Such decision architectures can utilize a systematic approach to bring physical devices together with software  to benefit many emerging engineering  applications, such as autonomous systems.    

In many applications \cite{feinberg2002handbook}\cite{altman:inria-00072663}, MDP models  are used to compute optimal  decisions when future actions contribute to the overall mission performance. 
Here  we consider MDP-based  sequential stochastic decision-making models  \cite{Puterman:1994Mar}. 
An MDP model is composed of a set of  time epochs, actions, states, and immediate rewards/costs. Actions transfer the system in a stochastic manner from one state to another and rewards are collected based on the actions taken at the corresponding states. Hence MDP models provide  analytical descriptions  of stochastic processes with  state and action spaces, the state transition probabilities as a function of actions, and with rewards as a function of the states and actions. The objective is to synthesize the best decision (action selection) policies to maximize expected rewards (minimize costs) for a given MDP. 
  It is well-known that optimal policies  must be stationary deterministic  when then the environment is stationary \cite{Puterman:1994Mar} and when there are no state constraints. We  present  new  results that aim to increase fidelity of MDPs for  decision-making by incorporating a  general class of state constraints in the MDP models, which then  lead to randomized action selection policies. 

In this paper, we study the problem of finding non-stationary randomized policy solutions for finite-horizon constrained MDPs (CMDPs). We consider a finite state MDP with randomized action sets. We give an efficient algorithm based on Linear Programming (LP) and duality theory of convex optimization \cite{Boyd:2004:CO:993483} that optimizes over  the convex set of all feasible policies and  guarantees the expected total reward to be above a  computable lower bound. Then the proposed policy is the projection of the unconstrained MDP policy on this  convex set. To best of our knowledge, this is the first result in state constrained MDP problems that gives an efficient algorithm for generating finite horizon randomized policies for CMDP with reward/cost guarantees. Another advantage of the proposed solution is that it is independent of initial state of the system. Thus it can be solved offline and implemented in large-scale systems of multi-agent systems.
 
 \section{Related Work}
 In MDPs, state constraints can be utilized   in several ways.  They can be used to  handle multiple design objectives where decisions are computed to maximize rewards for one  of the objectives while guaranteeing the value of the other objective to be   within a desired value \cite{Dolgov:2005Sta}. The constraints can also be  imposed by the environment  (e.g., safety constraints imposed by a mission  as in multi-agents autonomous systems \cite{behcet_tac15}), or telecommunication applications  \cite{altman1999constrained}. In these state constrained MDPs,   the calculation of optimal policies can be much more difficult, so the constraints are usually relaxed with the hope that the resulting decisions would still provide feasible solutions. However, in some applications, these constraints are critical \cite{ari_03,ari_acc06,nazli_acc14,behcet_tac15}. Consider an example of exploring  a disaster  area for search and rescue by using multiple autonomous vehicles.
  Suppose agents are running MDP policies to explore  the area based on a priori knowledge of the potential survivors. Due to safety conditions, vehicles may not be allowed to visit certain regions often, which can impose strict constraints on the  probability of having  a vehicle in such regions at any given time epoch. These safety considerations  can be formulated  as constraints on the probability distribution of the agent state, e.g., an inequality constraint for MDPs with discrete state and action spaces of finite cardinality
 $$
 B \mathbf{x}_t \leq \mathbf{d} \quad \ \forall \, t\geq 0,
 $$
 where $B \in \mathbb{R}^{n \times n}$ is a  matrix and $\mathbf{d} \in \mathbb{R}^ n$ is a vector that describes the safety constraints, $\mathbf{x}_t \in \mathbb{R}^n$ is the vector whose elements $x_t[i], i=1,...,n,$ are   the discrete probabilities for an agent to be in state $i$ at time $t$, and $\leq$ denotes element-wise inequality. 
    This paper aims to incorporate such safety  constraints into MDP formulations, that is,  optimal decision policies are synthesized  within the constraints. The state constraint above also allows direct relationship with the chance-constrained decision making, eg., chance constrained motion planning \cite{Blackmore:2010Apr}, where state constraints must be satisfied  with a prescribed probability. 
 MDPs with constraints has recently been applied to path planning in robotics applications \cite{Feyzabadi:2014Ris}. Beyond being able to ensure  safety, these constraints can provide advancements   in machine learning methods \cite{Dana:2013, Geibel:2006Rei, Elchamie:2014New}. Incorporating the  knowledge of physical constraints can potentially improve the estimates of  MDPs by better utilizing the real-time data, i.e., by searching the MDP parameters in smaller and better constrained feasible  sets. 

Previous research has focused on finding infinite-horizon stationary policies for constrained MDPs.  Due to the constraints, the optimal policies might no longer be deterministic and stationary \cite{Feinberg:1995Con}. \cite{Haviv:1996Onc} gives an example of a transient multi-chain MDP with state constraints and shows that the Bellman principle fails to hold and the optimal policy is not necessarily stationary. In the presence of constraints, randomization in the actions can then be necessary for obtaining optimal policies \cite{Nain:1986Opt}\cite{Frid:1972Ono}.  For stationary policies to be optimal, specific  assumptions on the underlying Markov chain are often introduced \cite{Altman:1991Mdp}.
Stationary policies for these specific models can be found by using algorithms based on Linear Programming (LP) 
\cite{Hordijk:1984Con} or Lagrange multipliers (\cite{Altman:1995The}\cite{Beutler:1986Tim}). 
 However, finding optimal policies in the broader class of   randomized policies  for CMDPs can be very  expensive computationally and there is no previously   known algorithm for the general case\footnote{See \cite{Feinberg:1996Con} for  very specific   examples where solutions can be  obtained.} \cite{Dolgov:2005Sta} \cite{Lee:2014Ext}.

\section{Preliminaries and Notation}
\subsection{States and Actions}
 Let the set $S=\{ 1, \dots , n\}$ be the set of states (note that $S$ is finite of cardinality $|S|=n$). Let us define $A_s=\{ 1, \dots , p\}$  to be the set of actions available in state $s$ (without loss of generality the number of actions does not change with the state, i.e., $|\mathcal{A}_s|=p$ for any $s\in S$). We consider a discrete-time system where actions are taken at different decision epochs. Let $s_t$ and $a_t$ be respectively the state and action at the $t$-th decision epoch. 

\subsection{Decision Rule and Policy}
We define a decision rule $D_t$ at time $t$ to be the following randomized function $D_t : S \rightarrow A_S$ that defines for every state $s\in S$ a random variable $D_t(s)\in A_{s}$ with a probability distribution defined on $\mathcal{P}(A_s)$ as follows $q_{D_t(s)}(a)=\text{Prob}[D_t=a|s_t=s]$ for any action $a\in A_s$. 
Let $$\pi =(D_1, D_2, \dots, D_{N-1})$$ be the policy for the decision making process given that there are $N-1$ decision epochs. Note that this decision rule has a Markovian property because it depends only on the current state. Indeed this paper considers only the Markovian policies, and  history dependent policies \cite{Puterman:1994Mar} are not considered.  

\subsection{Rewards}
Given a state $s\in S$ and action $a\in A$, we define the reward $r_t(s_t,a_t)\in \mathbb{R}$ to be any  real number and let $\mathcal{R}$ be the set having these values. 
With a little abuse of notation, we define the expected  reward for a given  decision rule $D_t$ at time $t$ to be $$r_t(s)=\mathbb{E}[r_t(s,D_t(s))]=\textstyle \sum _{a\in A_{s}}q_{D_t(s)}(a)r_t(s,a),$$ and the vector $\mathbf{r}_t\in \mathbb{R}^n$ to be the vector with the expected rewards for each state. Since  there are $N-1$ decision epochs,  there are $N$ reward stages and the final stage reward is given by $r_N(s_N)$ (or $\mathbf{r}_N$ the vector whose entries are the final rewards for each state).

\subsection{State Transitions}
We now define the transition probabilities as follows, $p_t(j|s,k)= \text{Prob}[s_{t+1}=j|s_t=s, a_t=k]$, and let $\mathcal{G}$ be the set of these transition probabilities. Let 
$$p_t(j|s,d(s))=\textstyle \sum_{a\in A_s}q_{D_t(s)}(a)p_t(j|s,a),$$
then the elements of the transition matrix $M_t\in \mathbb{R}^{n,n}$ are:
$$M_t[i,j]=\text{Prob}[s_{t+1}=i|s_t=j]=p_t(i|j,d(j)).$$
Let $x_t[i]=\text{Prob}[s_t=i|s_1]$ to be the probability of being at state $i$ at time $t$, and $\mathbf{x}_t\in \mathbb{R}^m$ to be a vector having these elements. Then the system evolves according to the following recursive equation $\mathbf{x}_{t+1}=M_t\, \mathbf{x}_t, \quad t=1,\dots,N$.

\subsection{Markov Decision Processes (MDPs)}
Let $\gamma \in [0,1]$ be the discount factor, which represents the importance of a current reward in comparison to future possible rewards. We will consider $\gamma =1$ throughout  the paper, but the results  are not affected and remain applicable after a suitable scaling when $\gamma <1$. 

A discrete MDP is a 5-tuple $(S, A_S, \mathcal{G}, \mathcal{R}, \gamma )$ where $S$ is a finite set of states, $A_s$ is a finite set of actions available for  state $s$, $\mathcal{G}$ is the set that contains the transition probabilities   given the current state and current action, and $\mathcal{R}$ is the set of rewards at time $t$  due to the current state and  the action.

\subsection{Performance Metric}
 For a policy to be better than another policy we need to define a performance metric. We will use the expected discounted total reward for our performance study, 
 $$v_N^\pi=\mathbb{E}_{\mathbf{x}_1}\left[\sum_{t=1}^{N-1}r_t(X_t,D_t(X_t))+ r_N(X_N)\right],$$
where the expectation is conditioned on knowing the probability distribution of the initial states (i.e., knowing $\mathbf{x}_1\in \mathcal{P}(S)$ where $x_1[i]=\text{Prob}[s_1=i]$). For example if the agent was in state $s$ at $t=1$, then $\mathbf{x}_1=\mathbf{e}_s$ where $\mathbf{e}_s$ is a vector of all zeros except for the $s$-th element which is equal to 1.  It is worth noting that in the above expression, both $X_t$ and $D_t$ are random variables. 

\section{Optimal Markovian Policy Synthesis Problem}
The optimal policy $\pi ^*$ is given as the policy that maximizes the performance measure, $\pi ^* = \text{argmax}_{\pi} v_N^\pi$, and $v^*_N$ to be the optimal value, i.e., $v_N^* = \text{max}_{\pi} v_N^\pi$.
Note that this maximization is unconstrained and the optimization variables are $q_{D_t(s)}(a)$ for any $s\in S$ and $a\in A_s$.\footnote{Since $v^\pi_N$ is continuous in the decision variables that belong to a closed and bounded set, then the $\max$ is always attained and $\text{argmax}$ is well defined.} The \emph{backward induction} algorithm \cite[p.~92]{Puterman:1994Mar} based on dynamic programming gives the optimal policy as well as the optimal value by using Algorithm \ref{alg1}.

\begin{algorithm}
\caption{Backward Induction: Unconstrained MDP Optimal Policy}
\label{alg1}
\begin{algorithmic} [1]
 \STATE {\bf Definitions:} For any state $s\in S$, we define $V_t^\pi(s)=\mathbb{E}_{\mathbf{x}_t=\mathbf{e}_s}\left[\sum_{k=t}^{N-1}r_k(X_k,D_k(X_k))+ r_k(X_N)\right]$ and $V_t^*(s)=\text{max}_{\pi} V_t^\pi$ given that $s_t=s$.
\STATE Start with $V^*_N(s)=r_N(s)$
\STATE for $t=N-1, \dots, 1$ given $V_{t+1}^*$ for all $s\in S$ calculate the optimal value
  $$V_{t}^*(s)=\max _a \big\{ r_t(s,a) + \sum _{j\in S}p_t(j|s,a)V_{t+1}^*(j)\big\}$$
  and the optimal policy is defined by $q_{D_t(s)}(a)=1$ for $a=a_{t}^*(s)$ given by: 
  $$a_{t}^*(s)=\text{argmax} _a \big\{ r_t(s,a) + \sum _{j\in S}p_t(j|s,a)V_{t+1}^*(j)\big\}$$
\STATE {\bf Result:} $V_1^*(s_1)=v_N^*$ where $s_1$ is the initial state.
\end{algorithmic}
\end{algorithm}

Algorithm \ref{alg1} solves the optimal policy selection  in the absence of  constraints on the expected state vector $\mathbf{x}_t$ for $t=1,\dots, N$. Next we introduce state constraints as follows
$$B\mathbf{x}_t\leq \mathbf{d}, \text{ for }t=1, \dots, N,$$
where $\leq$ denotes the element-wise inequalities, and $\mathbf{d}$ is a vector giving upper bounds on the state/transition probabilities.
 These state constraints   lead to correlations between decision rules at different states and the backward induction algorithm cannot then be used  to find optimal policy when the state constraints exist. Even finding a feasible policy  can be very challenging. We refer to this problem as Constrained MDP (CMDP).
 
The optimal  policy synthesis  problem with  constraints on  $\mathbf{x}_t$ can then be written as follows,
\begin{equation}
\begin{array}{rrclcl}
\displaystyle \max_{Q_1,\dots, Q_{N-1}} & \multicolumn{3}{l}{v_N^\pi} \\
\textrm{s.t.} & B \mathbf{x}_t & \leq & \mathbf{d}, \text{ for } t=1,\dots, N-1\\
& Q_t\mathbf{1} & = & \mathbf{1},\text{ for } t=1, \dots, N-1 \\
& Q_t & \geq & 0,  \text{ for } t=1, \dots, N-1, \\
\end{array}
\end{equation}
where $Q_t \in \mathbb{R}^{n,p}$ is the matrix of  decision  variables $q_t(s,a)=q_{D_t(s)}(a)$.  The last two sets of constraints guarantee that the variables define probability distributions. $B$ is a constant matrix, which is  assumed to be the identity $B=I_n$ (but the following discussion easily extends to any matrix $B$). Without the first set of constraints, the rows in $Q_t$ are independent and they are not correlated. With the added first set of constraints (that are non-convex because $\mathbf{x}_t=M_{t-1}\dots M_2M_1\mathbf{x}_1$, where each of the matrices $M_i$ is a linear function of the variables $Q_i$) correlation would exist between the rows of $Q_i$'s and the backward induction that leverages the independence of the rows of $Q_i$ cannot be applied as usual. The next section introduces a dynamic programming based algorithm for the above problem.

\section{Dynamic Programming (DP)  Approach to  Markovian Policy Synthesis}
In this section, we  transform the MDP problem into a deterministic Dynamic Programming (DP) problem, give the equivalence with the  unconstrained case and discuss how to solve  the (more complicated) state constrained problem.
First note  that the performance metric can be written as follows:
\begin{align*}
v_N^\pi&=\mathbb{E}_{\mathbf{x}_1}\left[\sum_{t=1}^{N-1}r_t(X_t,D_t(X_t))+ r_t(X_N)\right]\\
&=\sum _{t=1}^{N-1}\mathbb{E}_{\mathbf{x}_1}[r_t(X_t,D_t(X_t))] + \mathbb{E}_{\mathbf{x}_1}[r_t(X_N)]\\
&=\sum _{t=1}^{N}\mathbb{E}_{\mathbf{x}_1}[\mathbf{e}_{X_t}^T\mathbf{r}_t]=\sum _{t=1}^{N}\mathbf{x}_t^T\mathbf{r}_t,
\end{align*}
where the last equality utilized the fact that $\mathbb{E}_{\mathbf{x}_1}[X_t]=\mathbf{x}_t$.

Next we present the DP  formulation. Let $\mathbf{x}_t$ to be an element of the extended state space $\mathcal{S}=\mathcal{P}(S)$ (where $\mathcal{P}(.)$ denotes the probability space). The discrete-time dynamical system describing the evolution of the density $\mathbf{x}_t$ can then be given by
\begin{align*}
\mathbf{x}_{t+1} &=f_t(\mathbf{x}_t,Q_t) \text{ for } t=1,\dots , N-1,
\end{align*} 
such that $f_t(\mathbf{x}_t,Q_t) = M_t(Q_t)\mathbf{x}_t$ where $M_t(Q_t)$ is a column stochastic  matrix linear in the optimization variables $Q_t$. It is important to note that  the elements of the $i$-th \emph{column} in $M_t$ are  linear functions of  only the elements in the $i$-th \emph{row} of $Q_t$,  not all elements of $Q_t$. The above dynamics show that the probability distribution  evolves deterministically. Our policy $\pi=(D_1, \dots, D_{N-1})$ consists of a sequence of functions that map states $\mathbf{x}_t$ into controls $Q_t=D_t(\mathbf{x}_t)$  such  that $D_t(\mathbf{x}_t) \in \mathcal{C}(\mathbf{x}_t)$ where $\mathcal{C}(\mathbf{x}_t)$ is the set of feasible controls. In case of absence of the constraints ($B \mathbf{x}_t  \leq  \mathbf{d}$), $\mathcal{C}(\mathbf{x}_t)$ is independent of $\mathbf{x}_t$ and all admissible controls belong to the same convex set $\mathcal{C}$ for all states.

The additive reward per stage is defined as $g_N(\mathbf{x}_N)=\mathbf{x}_N^T\mathbf{r}_N$ and 
$$g_t(\mathbf{x}_t, Q_t)=\mathbf{x}_t^T\mathbf{r}_t , \text{ for } t=1, \dots, N-1.$$ 
The DP  algorithm  calculates the optimal value $v^*_N$ (and policy $\pi ^*)$ as follows \cite[Proposition 1.3.1, p.~23]{Bertsekas:2005Dyn}:
\begin{algorithm}
\caption{Dynamic Programming (DP)}
\label{alg2}
\begin{algorithmic} [1]
 \STATE Start with $J_N(\mathbf{x})=g_N(\mathbf{x})$
\STATE for $t=N-1,\dots , 1$
$$J_t(\mathbf{x})=\max _{Q_t\in \mathcal{C}(\mathbf{x})}\left\{g_t(\mathbf{x}, Q_t)+J_{t+1}(f_t(\mathbf{x},Q_t))\right\}.$$
\STATE {\bf Result:} $J_1(\mathbf{x})=v^*_N$.
\end{algorithmic}
\end{algorithm}

\begin{proof}[\bf Remark]
There are several  difficulties in applying the DP Algorithm \ref{alg2}. Note that in the expression $J_{t+1}(f_t(\mathbf{x},Q_t))$ used in the algorithm, $Q_t$ is an optimization variable. 
For a given $Q_t$ and $\mathbf{x}$, numerical methods can be used to compute the value of  $J_{t+1}$. 
But since $Q_t$ itself is an optimization variable,  the solution of the optimization problem in line $2$ of Algorithm $2$ can be very hard. 
In some special cases, for example when $J_t(\mathbf{x})$  can be expressed analytically  in a closed from, the solution complexity can be reduced significantly, as in the unconstrained MDP problems. 
\end{proof}


\subsection{Solving Unconstrained MDPs  by  DP}
%
Here we use the DP algorithm to derive well-known results on optimal MDP policies \cite{Puterman:1994Mar}. Even though the DP approach is not new (i.e., Algorithm \ref{alg2} itself is derived from theory of operators), its application  to finite-state MDPs will provide new insights. Specifically, when finite-state MDPs are subject to state constraints,  the existing theory cannot be readily applied. In that case, we show that  the DP algorithm can  still provide useful results. Therefore, the purpose of this section is to apply the DP algorithm for  unconstrained MDPs to obtain well-known results, and set up its use  for  more complicated finite-state constrained MDP problems.

Next we present the closed-form  solution of the unconstrained MDPs via  Algorithm \ref{alg1}. In the absence of state constraints, the set of admissible actions  at time $t$, denoted  by $\mathcal{C}(\mathbf{x}_t)=\mathcal{C}$, can be described as 
 $$Q_t\mathbf{1}= \mathbf{1}, \text{ and } Q_t\geq 0.$$
 Note that each row of $Q_t$  represents the action choice   probabilities for a given state, i.e.,    
 $Q_t^{i}\in \mathcal{C}^i$ for $i=1, \dots, n$ where $Q_t^{i}$ is the $i$-th row in $Q_t$ and $\mathcal{C}^i$ is the set of  row vectors of probabilities having dimension $|A|$. We can now apply the DP Algorithm \ref{alg2} by letting $J_{N}(\mathbf{x})=\mathbf{x}^T\mathbf{r}_N$, and iterating backward from $t=N-1$ to $t=1$ as follows  
\begin{align*}
J_{t}(\mathbf{x})&= \max _{Q_t\in \mathcal{C}}\{\mathbf{x}^T\mathbf{r}_t +J_{t+1}(M_t\mathbf{x})\}\\
&=\max _{Q_t^{i}\in C^{i}, i=1,\dots, m} \{\mathbf{x}^T\mathbf{r}_t+\mathbf{x}^TM_t^TV^*_{t+1}\}\\
&=\max _{Q_t^{i}\in C^{i}, i=1,\dots, m} \{ \sum _i x[i] (r_t(i,Q_t^{i})+M^{Ti}_t(Q_t^{i})V^*_{t+1})\}\\
&=\sum _i x[i]\left(\max _{Q_t^{i}\in C^{i}}\left\{ r_t(i,Q_t^{i})+M^{Ti}_t(Q_t^{i})V^*_{t+1}\right\}\right),
\end{align*}
 where $V^*_{t+1}$ is the optimal value function computed by Algorithm \ref{alg1}, and $M^{Ti}_t(Q_t^{i})$ indicates the transpose of the $i$-th column of $M_t$ which is a linear function of the variables in the $i$-th row of $Q_t$. The last equality is due to the fact that $x[i]\geq 0$ for all $i$ and the value function is  separable in terms of the optimization variables. The maximization inside the parenthesis in the last equation gives  $V^*_{t}[i]$, and hence 
 $$J_{t}(\mathbf{x})=\sum _i x[i]V^*_{t}[i] = \mathbf{x}^T V^*_{t},$$
 which has a closed-form solution as function of $\mathbf{x}$.  This discussion justifies that the calculation of $V^*_{t}$ for $t=N,\dots ,1$ in Algorithm \ref{alg1} is sufficient for finding the optimal value of the MDP given by $v^*_N=J_1(\mathbf{x}_1)=\mathbf{x}_1^TV^*_{1}$.
 
 \begin{proof}[\bf Remark]
 $V^*_{1}$ obtained via Algorithm \ref{alg1} leads to a  \emph{deterministic} \emph{Markovian} policy, which also defines an optimal policy for the unconstrained MDP, i.e., the policy that maximizes  the total expected reward. It must be emphasized  that, if state constraints were present, then Algorithm $1$ does not necessarily  yield an optimal, or even a feasible,  policy.
 \end{proof}   
  
 \subsection{Solving State Constrained MDPs by DP}
When the state constraints are present,  $J_{t}(\mathbf{x})$ does not have a closed-form solution, and hence finding an optimal (even a feasible) solution is challenging. 
This section presents  a new algorithm, Algorithm \ref{alg3},  to compute a feasible solution of the state constrained MDPs with  lower bound  guarantees on the  expected reward.

\begin{algorithm}
\caption{Backward Induction: State constrained MDP}
\label{alg3}
\begin{algorithmic} [1]
 \STATE {\bf Definitions:} $Q_t$ for $t=1,..., N-1$ are the optimization  variables, describing the decision policy. Let $\mathcal{X}\!=\!\{\mathbf{x}\!\in \! \mathbb{R}^n\!: 0\! \leq \! \mathbf{x}\!\leq\! \mathbf{d}, \, \mathbf{1}^T \mathbf{x}\!=\! 1  \}$ and    $\mathcal{C}\!=\! \underset{\mathbf{x}\in \mathcal{X}}{\cap} \mathcal{C}(\mathbf{x})$ with 
\begin{eqnarray*}
 \mathcal{C}(\mathbf{x}) &\!=\! &\left\{Q \! \in \! \mathbb{R}^{n\times p}\!:  Q\mathbf{1}\!=\! \mathbf{1},\, Q \! \geq \! 0,     M(Q)\mathbf{x}\!\leq \!\mathbf{d}  \right\}\!
 \end{eqnarray*}
 where $M(Q)$ is the transition matrix linear in $Q$. 
 
\STATE Set $\hat{U}_N=\mathbf{r}_N$.
\STATE For $t\!=\! N-1, \dots, 1$, given $\hat{U}_{t+1}$, compute the policy
\begin{equation}\label{eq:Qhat}
\hat{Q}_{t}=\underset{Q\in \mathcal{C}}{\text{argmax}} \ \min _{\mathbf{x}\in \mathcal{X}}\hspace*{0.2cm} \mathbf{x}^T \left(\mathbf{r}_{t}(Q)+M_{t}(Q)^T\hat{U}_{t+1}\right),
\end{equation}
and the vector of expected rewards
  $$\hat{U}_{t}=\mathbf{r}_{t}(\hat{Q}_t)+M_{t}(\hat{Q}_t)^T\hat{U}_{t+1}.$$
\STATE {\bf Result:}  $v_N^*\geq \mathbf{x}_1^T\hat{U}_{1}$
\end{algorithmic}
\end{algorithm}
 
\begin{theorem}\label{thm:1}
 Algorithm \ref{alg3} provides a feasible  policy for the state constrained MDP that guarantees the expected total reward to be greater than a lower bound $R^\#=\mathbf{x}_1^T\hat{U}_{1}$, i.e.,
 $v_N^*\geq R^\#$.
\end{theorem}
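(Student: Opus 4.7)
The plan is to prove Theorem 1 in two logically independent parts: (i) establish feasibility of the policy $\hat{\pi}=(\hat{Q}_1,\dots,\hat{Q}_{N-1})$ returned by Algorithm \ref{alg3}, and (ii) show the identity $v_N^{\hat{\pi}}=\mathbf{x}_1^T\hat{U}_1$, from which $v_N^*\geq R^\#$ follows at once because $v_N^*$ is the supremum of $v_N^{\pi}$ over feasible policies.

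For (i), I would argue by forward induction on $t$ that the reachable distribution $\mathbf{x}_t$ under $\hat{\pi}$ lies in the admissible set $\mathcal{X}=\{\mathbf{x}:0\leq \mathbf{x}\leq \mathbf{d},\ \mathbf{1}^T\mathbf{x}=1\}$. The base case is the standing assumption $\mathbf{x}_1\in\mathcal{X}$. For the inductive step, the crucial design choice is that $\hat{Q}_t$ is selected from the \emph{robust} set $\mathcal{C}=\bigcap_{\mathbf{x}\in\mathcal{X}}\mathcal{C}(\mathbf{x})$ rather than from $\mathcal{C}(\mathbf{x}_t)$ alone. So whenever $\mathbf{x}_t\in\mathcal{X}$, we have $\hat{Q}_t\in\mathcal{C}(\mathbf{x}_t)$, which unpacks to $\hat{Q}_t\mathbf{1}=\mathbf{1}$, $\hat{Q}_t\geq 0$, and $M_t(\hat{Q}_t)\mathbf{x}_t\leq \mathbf{d}$. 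The first two conditions make $M_t(\hat{Q}_t)$ column stochastic, so $\mathbf{x}_{t+1}=M_t(\hat{Q}_t)\mathbf{x}_t$ is again a probability vector; the third yields the upper bound $\mathbf{x}_{t+1}\leq \mathbf{d}$, hence $\mathbf{x}_{t+1}\in\mathcal{X}$. This closes the induction and establishes $B\mathbf{x}_t=\mathbf{x}_t\leq \mathbf{d}$ for every $t=1,\dots,N$.

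For (ii), I would prove the stronger backward-induction identity
\[
\mathbf{x}_t^T\hat{U}_t \;=\; \sum_{k=t}^{N-1}\mathbf{x}_k^T\mathbf{r}_k(\hat{Q}_k) \;+\; \mathbf{x}_N^T\mathbf{r}_N,
\]
where $\mathbf{x}_k$ denotes the distribution actually produced by $\hat{\pi}$ starting from $\mathbf{x}_1$. The base case $t=N$ is immediate from $\hat{U}_N=\mathbf{r}_N$. For the inductive step, use the defining recursion $\hat{U}_t=\mathbf{r}_t(\hat{Q}_t)+M_t(\hat{Q}_t)^T\hat{U}_{t+1}$ together with the adjoint identity $\mathbf{x}_t^T M_t(\hat{Q}_t)^T = (M_t(\hat{Q}_t)\mathbf{x}_t)^T = \mathbf{x}_{t+1}^T$; a single line of algebra then reduces the claim at time $t$ to the claim at time $t+1$. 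Setting $t=1$ and comparing with the closed-form expression $v_N^\pi=\sum_{t=1}^N \mathbf{x}_t^T\mathbf{r}_t$ derived earlier in the paper gives $\mathbf{x}_1^T\hat{U}_1 = v_N^{\hat{\pi}}$. Combined with $v_N^*\geq v_N^{\hat{\pi}}$ (valid because $\hat{\pi}$ is feasible by part (i)), this concludes the proof.

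The main subtlety is part (i): the role of the maximin in \eqref{eq:Qhat} is to tighten the lower bound rather than to enforce feasibility, and any policy chosen from $\mathcal{C}$ would suffice for the bound-existence claim. What matters is the robust construction $\mathcal{C}=\bigcap_{\mathbf{x}\in\mathcal{X}}\mathcal{C}(\mathbf{x})$, which decouples the per-stage control choice from the actual state distribution and lets the forward induction go through uniformly. An implicit prerequisite is that $\mathcal{C}$ is non-empty (equivalently that the CMDP admits any feasible Markovian policy); this is a well-posedness assumption on the data $(B,\mathbf{d},\mathbf{x}_1)$ rather than something to be proved here.
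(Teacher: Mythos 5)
Your proposal is correct, but it proves the theorem by a genuinely different route than the paper. The paper's proof runs a backward induction on the \emph{optimal} constrained value function of Algorithm~\ref{alg2}: it shows $J_t(\mathbf{x})\geq \mathbf{x}^T\hat{U}_t$ for all admissible $\mathbf{x}$, using the inclusion $\mathcal{C}\subseteq\mathcal{C}(\mathbf{x})$ to weaken the maximization and the fact that $\hat{Q}_t\in\mathcal{C}$ to get the final inequality, and then reads off $v_N^*=J_1(\mathbf{x}_1)\geq \mathbf{x}_1^T\hat{U}_1$. You instead never touch $J_t$: you (i) prove feasibility of $\hat{\pi}$ by forward induction on the reachable densities, and (ii) evaluate the suboptimal policy \emph{exactly}, showing $v_N^{\hat{\pi}}=\mathbf{x}_1^T\hat{U}_1$ via the adjoint identity $\mathbf{x}_t^TM_t^T=\mathbf{x}_{t+1}^T$, then invoke $v_N^*\geq v_N^{\hat{\pi}}$. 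Your decomposition buys more than the paper's argument delivers: it explicitly establishes the ``feasible policy'' clause of the theorem (which the paper's proof never addresses -- it only bounds the value function), and it proves the sharper statement $v_N^{\hat{\pi}}=R^\#$, which is what actually justifies the paper's subsequent unproved assertion that $v_N^{\hat{\pi}}\geq R^\#$. The paper's route, in exchange, certifies $J_t(\mathbf{x})\geq\mathbf{x}^T\hat{U}_t$ uniformly over $\mathbf{x}\in\mathcal{X}$, i.e., a lower bound on the optimum from any admissible starting density, not just $\mathbf{x}_1$. Your closing observations -- that the maximin in \eqref{eq:Qhat} only tightens the bound while feasibility comes entirely from drawing $\hat{Q}_t$ from the robust set $\mathcal{C}$, and that nonemptiness of $\mathcal{C}$ is an implicit well-posedness assumption -- are accurate and are likewise left implicit in the paper.
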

\begin{proof}
The proof is based on applying the DP Algorithm \ref{alg2}. Letting $J_{N}(\mathbf{x})=\mathbf{x}^T\mathbf{r}_N$,  it will be shown by induction that $J_{t}(\mathbf{x})\geq \mathbf{x}^T\hat{U}_t$. It is true for $t=N$. Now supposing that it is true for $t+1$, let's prove it true for $t$. We have from Algorithm \ref{alg2} that
 \begin{align*}
J_t(\mathbf{x})&=\max _{Q_t\in \mathcal{C}(\mathbf{x})}\left\{g_t(\mathbf{x}, Q_t)+J_{t+1}(f_t(\mathbf{x},Q_t))\right\}\\
&=\max _{Q_t\in \mathcal{C}(\mathbf{x})}\left\{\mathbf{x}^T\mathbf{r}_t+J_{t+1}(M_t\mathbf{x})\right\}\\
&\geq \max _{Q_t\in \mathcal{C}(\mathbf{x})}\left\{\mathbf{x}^T\mathbf{r}_t+\mathbf{x}^TM_t^T\hat{U}_{t+1}\right\}\\
&\geq \max _{Q_t\in \mathcal{C}}\left\{\mathbf{x}^T(\mathbf{r}_t+M_t^T\hat{U}_{t+1})\right\}\\
&\geq \mathbf{x}^T\hat{U}_t,
\end{align*}
where in the third line we applied the induction hypothesis and the last line by the definition of $\hat{Q}_t$. Since $J_1(\mathbf{x}_1)=v_N^*$ (line 3 in Algorithm \ref{alg2}), this ends the proof. 
\end{proof} 
 
Therefore, by calculating $\hat{U}_t$ for $t=N-1, \dots, 1$ from Algorithm \ref{alg3}, we can find a
policy that gives minimum guarantees on the total expected reward, namely  $v_N^{\hat{\pi}}\geq R^{\#}$, where $\hat{\pi}=(\hat{Q}_1, \hat{Q}_2, \dots , \hat{Q}_{N-1})$.


\subsection{LP formulation of $\hat{Q}_t$}
The calculation of  $\hat{Q}_{t}$ for $t=1, \dots, N-1$  in equation \eqref{eq:Qhat} is the main challenge in the application of Algorithm \ref{alg3}. This section describes a linear programming approach  to  the computation of $\hat{Q}_{t}$ solution in every iteration loop of Step 3 in Algorithm \ref{alg3}.
From the algorithm,  $\hat{Q}_t$ is given as follows:
\begin{equation}\label{eq:Qt}
\begin{split}
\hat{Q}_{t}&=\underset{Q_{t}\in \mathcal{C}}{\text{argmax}} \ \min _{\mathbf{x}\in \mathcal{X}}\hspace*{0.2cm} \mathbf{x}^T U_{t}(Q_{t}),  \\
 \mbox{where } & U_{t}(Q_{t}) =\mathbf{r}_{t}(Q_t)+M_{t}(Q_t)^T\hat{U}_{t+1}  \\ 
\mbox{with }\hspace*{0.2cm}&\hat{U}_{N} =\mathbf{r}_N \ \  \mbox{and}  \ \ \hat{U}_{t}\!=\! U_{t}(\hat{Q}_{t}),  \ \  t\!=\! N\!-\!1, \dots, 1.  
\end{split}
\end{equation}
Let $R_t\in \mathbb{R}^{m,p}$ be the matrix having the elements $r_t(s,a)$ where $s\in S$ and $a\in A$. Let $G_{t,k}$ be the transition matrix having the elements $G_{t,k}[i,j]=p_t(i|j,k)$.
Then we have 
\begin{equation}\label{eq:MR}
M_t(Q_t)\!=\! \sum _kG_{t,k}\odot \left(\mathbf{1}(Q_t\mathbf{e}_k)^T\right)
\mbox{and} \ \  \mathbf{r}_t(Q_t)\!=\!(R_t\odot Q_t)\mathbf{1}.
\end{equation}


\begin{theorem} 
The max-min problem given by (\ref{eq:Qt}) with (\ref{eq:MR}) can be solved by  the following equivalent linear programming problem (given $t$, $\mathbf{d}$, $G_{t,k} \text{ for } k=1\dots p$, $R_t$, and $\hat{U}_{t+1}$):
\begin{equation}\label{eq:LP}
\begin{aligned}
\underset{Q,\mathbf{y},z,\mathbf{r}, M,S,\mathbf{s}, K}{\text{maximize}}& \hspace*{0.5cm} -\mathbf{d}^T\mathbf{y}+z\\
\text{subject to}&\hspace*{0.5cm} M=\sum _{k=1}^{p}G_{t,k}\odot \left(\mathbf{1}(Q\mathbf{e}_k)^T\right)\\
&\hspace*{0.5cm} \mathbf{r}=(R_t\odot Q)\mathbf{1}\\
&\hspace*{0.5cm} -\mathbf{y}+z\mathbf{1}\leq \mathbf{r}+M^T\hat{U}_{t+1}\\
&\hspace*{0.5cm} K=M+S+\mathbf{s}\mathbf{1}^T\\ 
&\hspace*{0.5cm} \mathbf{s}+\mathbf{d}\geq K\mathbf{d}\\
&\hspace*{0.5cm} Q\mathbf{1}=\mathbf{1}, Q\geq 0, \mathbf{y}\geq 0, S\geq 0, K\geq 0.
\end{aligned}
\end{equation}
\end{theorem}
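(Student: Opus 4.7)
The plan is to establish the equivalence between the max-min problem \eqref{eq:Qt} and the LP \eqref{eq:LP} through two separate applications of linear programming duality: one to eliminate the inner minimization over $\mathbf{x}$, and a second (semi-infinite) one to encode the implicit robust constraint hidden in $Q\in\mathcal{C}$.

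First I would dualize the inner minimization. For fixed $Q$, the inner problem $\min_{\mathbf{x}\in\mathcal{X}}\mathbf{x}^T U_t(Q)$ is an LP in $\mathbf{x}$ with box constraints $0\le\mathbf{x}\le\mathbf{d}$ and the simplex equality $\mathbf{1}^T\mathbf{x}=1$. Attaching multipliers $\mathbf{y}\geq 0$ to $\mathbf{x}\leq\mathbf{d}$, a nonnegative multiplier to $\mathbf{x}\geq 0$, and a free scalar $z$ to $\mathbf{1}^T\mathbf{x}=1$, standard LP duality produces the dual
$$\max_{\mathbf{y}\geq 0,\,z}\bigl\{-\mathbf{d}^T\mathbf{y}+z \,:\, -\mathbf{y}+z\mathbf{1}\leq U_t(Q)\bigr\},$$
and strong duality holds since $\mathcal{X}$ is a nonempty bounded polytope. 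Substituting this dual back collapses the outer max-min into a single joint maximization over $(Q,\mathbf{y},z)$ with objective $-\mathbf{d}^T\mathbf{y}+z$ and the linear constraint $-\mathbf{y}+z\mathbf{1}\leq \mathbf{r}_t(Q)+M_t(Q)^T\hat{U}_{t+1}$, which is exactly the third constraint line of \eqref{eq:LP}.

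Next I would reformulate the condition $Q\in\mathcal{C}=\bigcap_{\mathbf{x}\in\mathcal{X}}\mathcal{C}(\mathbf{x})$, which embeds the semi-infinite requirement $M(Q)\mathbf{x}\leq\mathbf{d}$ for every $\mathbf{x}\in\mathcal{X}$. This decouples row-wise into $\max_{\mathbf{x}\in\mathcal{X}}\mathbf{m}_i^T\mathbf{x}\leq d_i$, where $\mathbf{m}_i^T$ is the $i$-th row of $M(Q)$. Taking the LP dual of this inner maximum and invoking strong duality shows the constraint is equivalent to the existence of vectors $\mathbf{k}_i\geq 0$ and scalars $s_i$ satisfying $\mathbf{k}_i-s_i\mathbf{1}\geq\mathbf{m}_i$ and $\mathbf{d}^T\mathbf{k}_i\leq d_i+s_i$. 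Stacking the $\mathbf{k}_i$ into the rows of $K$, collecting the $s_i$ into a vector $\mathbf{s}$, and introducing a nonnegative slack $S=K-\mathbf{s}\mathbf{1}^T-M\geq 0$, these row-wise certificates rewrite into the matrix-level identities $K=M+S+\mathbf{s}\mathbf{1}^T$, $K\mathbf{d}\leq\mathbf{d}+\mathbf{s}$, $K\geq 0$, $S\geq 0$, which are precisely the remaining constraints of \eqref{eq:LP}.

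To finish, I would fold in \eqref{eq:MR}, which linearly expresses $M=M_t(Q)$ and $\mathbf{r}=\mathbf{r}_t(Q)$ in terms of $Q$ and supplies the first two equality constraints of \eqref{eq:LP}; the simplex constraints $Q\mathbf{1}=\mathbf{1}$ and $Q\geq 0$ survive unchanged. Combining both dualizations then matches both the objective and the feasible set of \eqref{eq:LP} with those of the reformulated \eqref{eq:Qt}, so that the $Q$-component of any optimizer of \eqref{eq:LP} realizes $\hat{Q}_t$. The main technical delicacy I expect is justifying strong duality for both dualization steps; this reduces to verifying that $\mathcal{X}$ is nonempty (a standing well-posedness assumption on the CMDP) and that each row-wise inner maximum is finite, which is automatic because $\mathcal{X}$ is compact and each $\mathbf{m}_i$ has bounded nonnegative entries.
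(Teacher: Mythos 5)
Your proposal is correct and follows the same two-step decomposition as the paper: first dualize the inner minimization over $\mathbf{x}\in\mathcal{X}$ to obtain the $(\mathbf{y},z)$ variables and the objective $-\mathbf{d}^T\mathbf{y}+z$, then replace the semi-infinite membership condition $Q\in\mathcal{C}$ by the finite system in $(K,S,\mathbf{s})$. The only substantive difference is in the second step: the paper imports the characterization of $\mathcal{M}=\cap_{\mathbf{x}\in\mathcal{X}}\mathcal{M}(\mathbf{x})$ wholesale by citing an external theorem, whereas you re-derive it from scratch by dualizing each row-wise worst case $\max_{\mathbf{x}\in\mathcal{X}}\mathbf{m}_i^T\mathbf{x}\leq d_i$ and stacking the resulting certificates $(\mathbf{k}_i,s_i)$ into $K$ and $\mathbf{s}$ with the slack $S=K-M-\mathbf{s}\mathbf{1}^T\geq 0$. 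That makes your argument self-contained and makes explicit why the certificate has exactly the form $K=M+S+\mathbf{s}\mathbf{1}^T$, $K\mathbf{d}\leq\mathbf{d}+\mathbf{s}$; the price is that you must (and do) justify strong duality for each row, which is automatic from compactness and nonemptiness of $\mathcal{X}$. Both routes yield the same LP, and your explicit handling of the strong-duality hypotheses is, if anything, slightly more careful than the paper's.
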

\begin{proof}
The proof will use the duality theory of linear programming \cite{Boyd:2004:CO:993483}, which implies that the following primal and dual problems produce the same cost  
\begin{align*}
\text{PRIMAL}  \hspace*{1.4cm} & \hspace*{2cm} \text{DUAL} \\
 \text{minimize }\mathbf{b}^T\mathbf{x} \hspace*{1cm} & 
  \hspace*{1.5cm}  \text{maximize }\mathbf{c}^T\mathbf{y}\\
\text{ s.t. } A^T\mathbf{x}\geq \mathbf{c}, \mathbf{x}\geq 0 \hspace*{0.5cm}  &  \hspace*{1.2cm}  \text{ s.t. } A\mathbf{y}\leq \mathbf{b}, \mathbf{y}\geq 0.
\end{align*} 
 Since the set $\mathcal{X}$ is defined as $0\leq \mathbf{x}\leq \mathbf{d}$ and $\mathbf{x}^T\mathbf{1}=1$, then the $\min$ in \eqref{eq:Qt} can be obtained by a minimization problem with the following primal problem parameters:
$$\mathbf{b}=U_t, A= [-I_n \ \mathbf{1} \ -\mathbf{1}], \text{ and }\mathbf{c}^T=[-\mathbf{d}^T \ 1 \ -1].$$
The dual of this program  is 
\begin{align*}
\underset{\mathbf{y},z}{\text{maximize}}& \hspace*{0.5cm} -\mathbf{d}^T\mathbf{y}+z\\
\text{subject to}&\hspace*{0.5cm} -\mathbf{y}+z\mathbf{1}\leq U_t(Q_t)\\
&\hspace*{0.5cm} \mathbf{y}\geq 0, z \text{ unconstrained}.
\end{align*}
Next by considering the $\text{argmax}$ in \eqref{eq:Qt}, it remains to show that the set $\mathcal{C}$ can be represented by linear inequalities to write \eqref{eq:Qt} as a maximization LP problem. It is indeed the case by  using \cite[Theorem 1]{behcet_tac15} which says the following:
\begin{align*}
M\in \mathcal{M} \hspace*{0.5cm} &\Leftrightarrow  \hspace*{0.5cm} \exists S\geq 0, K\geq 0, \mathbf{s} \text{ such that }\\
~ & \hspace*{1cm}  K=M+S+\mathbf{s}\mathbf{1}^T\\ 
& \hspace*{1cm} \mathbf{s}+\mathbf{d}\geq K\mathbf{d},
\end{align*}
where $\mathcal{M}=\underset{\mathbf{x}\in \mathcal{X}}{\cap}\mathcal{M}(\mathbf{x})$ and $\mathcal{M}(\mathbf{x})\!=\!\{M\! \in \mathbb{R}^{n\times n}, \mathbf{1}^TM\!=\! \mathbf{1}^T, M\geq 0, M\mathbf{x}\leq \mathbf{d}\}$.
 As $M$ in \eqref{eq:MR} is a linear function of the decision variable $Q$, the set $\mathcal{C}$ is equivalently described by  $\mathcal{C}=\{Q\in \mathbb{R}^{n,p},Q\mathbf{1}=\mathbf{1},Q\geq 0, M(Q)\in \mathcal{M}\}$, which implies that $\mathcal{C}$ can be described by linear inequalities. Now combining this result  with the dual program, we can conclude that  $\hat{Q}_t$ can be obtained via the linear program given in the theorem, which concludes the proof. 
\end{proof}
\subsection{Better Heuristic for $\hat{Q}_t$}
The resulting solution $\hat{Q}_t$ of linear program \eqref{eq:LP} is not unique. Let the convex solution set be  $\mathcal{Q}$. Therefore, among the possible solution variables $Q\in \mathcal{Q}$ we are interested in values that are as close as possible to $Q_{MDP}$ (found by Algorithm \ref{alg1}) because the latter gives optimal solution policy for unconstrained MDP problem. But since $Q_{MDP}$ might not be feasible (due to the additional constraints), we target the projection of $Q_{MDP}$ on $\mathcal{Q}$. Therefore, we choose $\hat{Q}_t$ to be the solution of the following optimization:
\begin{align*}
\hat{Q}_t=\underset{Q\in \mathcal{Q}}{\text{argmin}}& \hspace*{0.5cm} ||Q-Q_{MDP}||.
\end{align*}
Note that if $Q_{MDP}\in \mathcal{Q}$, then this optimization will give the optimal policy. Therefore, with this extra optimization, the output policy not only guarantees a lower bound on expected reward reward, but it also retrieves back the solution of the unconstrained MDP if the state constraints were relaxed. 

\section{Simulations}
This section presents  a simulation example to demonstrate the performance of the proposed methodology for CMPDs on a vehicle swarm coordination problem \cite{Acikmese:2012Ama, behcet_tac15}. 
 In this application, autonomous vehicles (agents)  explore a region $F$, which can be partitioned into $n$ disjoint subregions (or bins) $F_i$ for  $i=1, \dots, n$ such that $F=\cup _iF_i$.  We can model the system as an MDP where the states of agents are their  bin locations and the actions of a vehicle are defined by the possible transitions to neighboring bins.  Each vehicle collects rewards while traversing the area where, due to the stochastic environment, transitions are stochastic (i.e., even if the vehicle's command is to move to ``right", the environment can send the vehicle to ``left"). 
Note that  the state constraints discussed in this paper can can be interpreted as follows. If a large number of vehicles are used, then  the density of vehicles evolve as a Markov chain. Since the physical environment (capacity/size of bins) can impose  constraints on the number of vehicles in a given bin, the state (safety) constraints on the density are needed.

For simplicity we consider  the operational region to be a $3$ by $3$ grid. Each vehicle has $5$ possible actions,  ``up'',  ``down'', ``left'',  ``right'', and ``stay'', see Figure \ref{bins}. When the vehicle is on the boundary, we set the probability of actions that cause transition outside of   the domain to zero. For example  in bin 1 the actions ``left" and ``up" are not permitted, which can easily be imposed as linear equality constraints  in our formulation. 

%
The reward vectors $R_t$ for $t=1,\dots ,N\!-\!1$ and $R_N$ are (tenth state is not icn, 
\begin{equation}
\begin{split}
R_t = \left[ \begin{array}{ccccccccc}1 & 1 & 1 & 10 & 5 & 0 & 3 &  3 & 3    \end{array}\right]^T
\text{ and } \\
R_N = \left[ \begin{array}{ccccccccc}0 & 0  & 0  & 10 & 0 & 0 &  0 & 0 & 0    \end{array}\right]^T 
\end{split}
\end{equation}
where $R[i]$ is the reward collected at bin (state) $i$  and is assumed independent of the action taken.
Density constraints for different bins are given as follows
\begin{equation}
 \mathbf{d}=\left[ \begin{array}{ccccccccc} 0.4 & 0.4  & 0.4 & 0.5 & 0.05 & 1 &  0.2 & 0.2& 0.2    \end{array}\right]^T,
\end{equation}
where any bin $i$ should have $x_t[i]\leq d[i]$ for $t=1, 2, \dots$.
The MDP solution (which is known to give deterministic policies) that maximizes the total expected reward does not satisfy these constraints. However, with our proposed policy, not only the constrained are satisfied, but also the solution gives guarantees on the expected total reward. Note that the linear program generates the policies independent of the initial distribution. Therefore, even if the latter was unknown (which is usually the case in autonomous swarms), the generated policy satisfy the constraints.
\begin{figure}
\begin{center}
\includegraphics[scale=0.14]{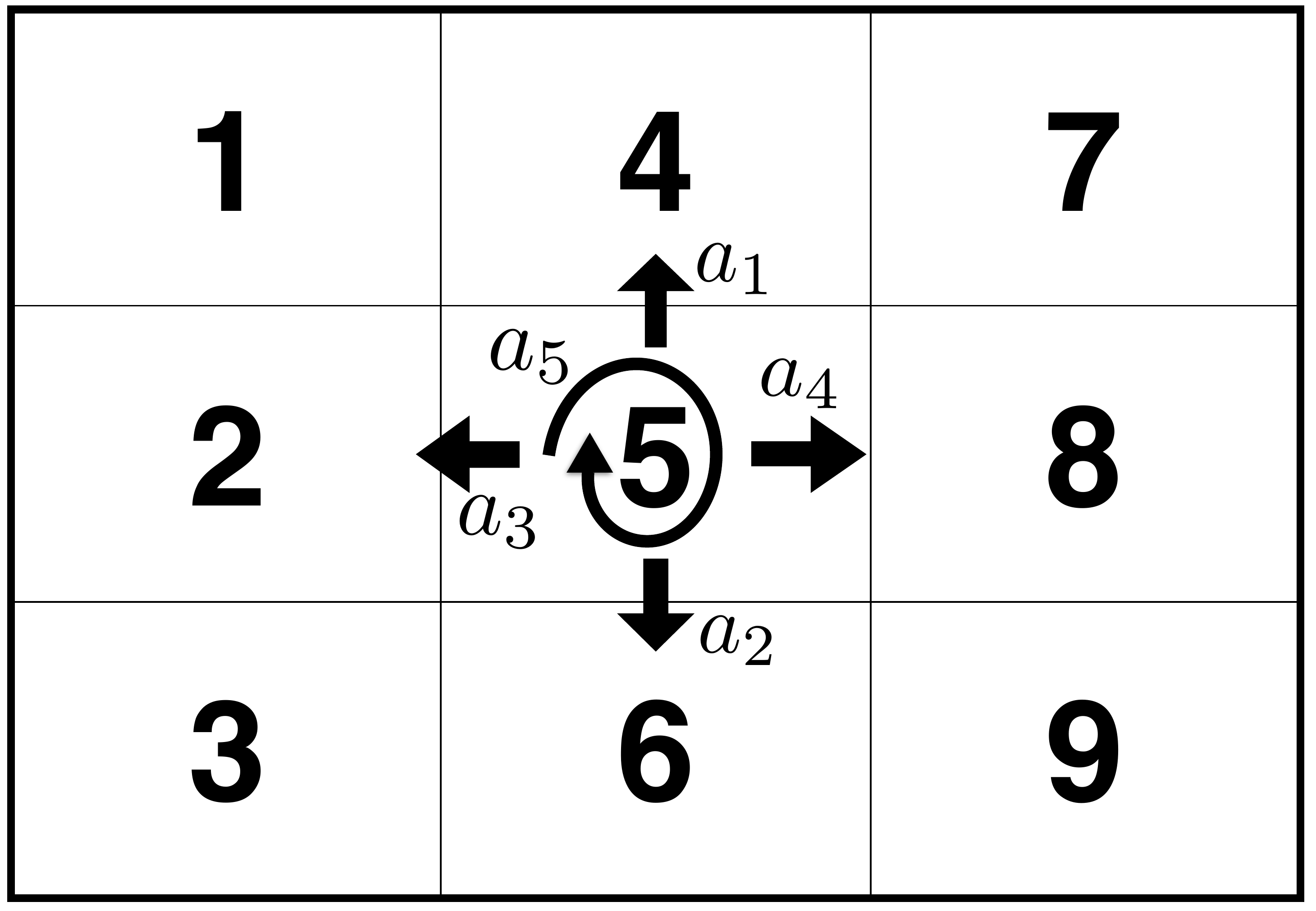}
\caption{Illustration of $3 \times 3$ grid describing the MDP states, and the  5 actions  (Up, Down, Left, Right, and Stay).
} 
\label{bins}
\vspace{-20pt}
\end{center}
\end{figure}

We now consider that all the vehicles initially are in bin $6$ (i.e., $x_{t=1}[6]=1$ and note that this is a feasible starting vector because $d[6]=1$).  Figure \ref{constraints} shows that in the scenario considered in this simulation, the unconstrained MDP policy would lead all the swarm vehicle to one bin and it would violate the constraint because the maximum allowed density is $0.5$. The constraints are also violated at the bin $5$ as optimal policy made the swarm traverse this bin leading to a density of $0.8$ where the maximum allowed density was $0.05$. However, our policy generated from Algorithm \ref{alg3} led to a distribution of the swarm in such a way the constrains are satisfied at every iteration. To further investigate the efficiency of the algorithm we have to study the rewards associated to the proposed policy.

\begin{figure}
\begin{center}
\includegraphics[scale=0.41]{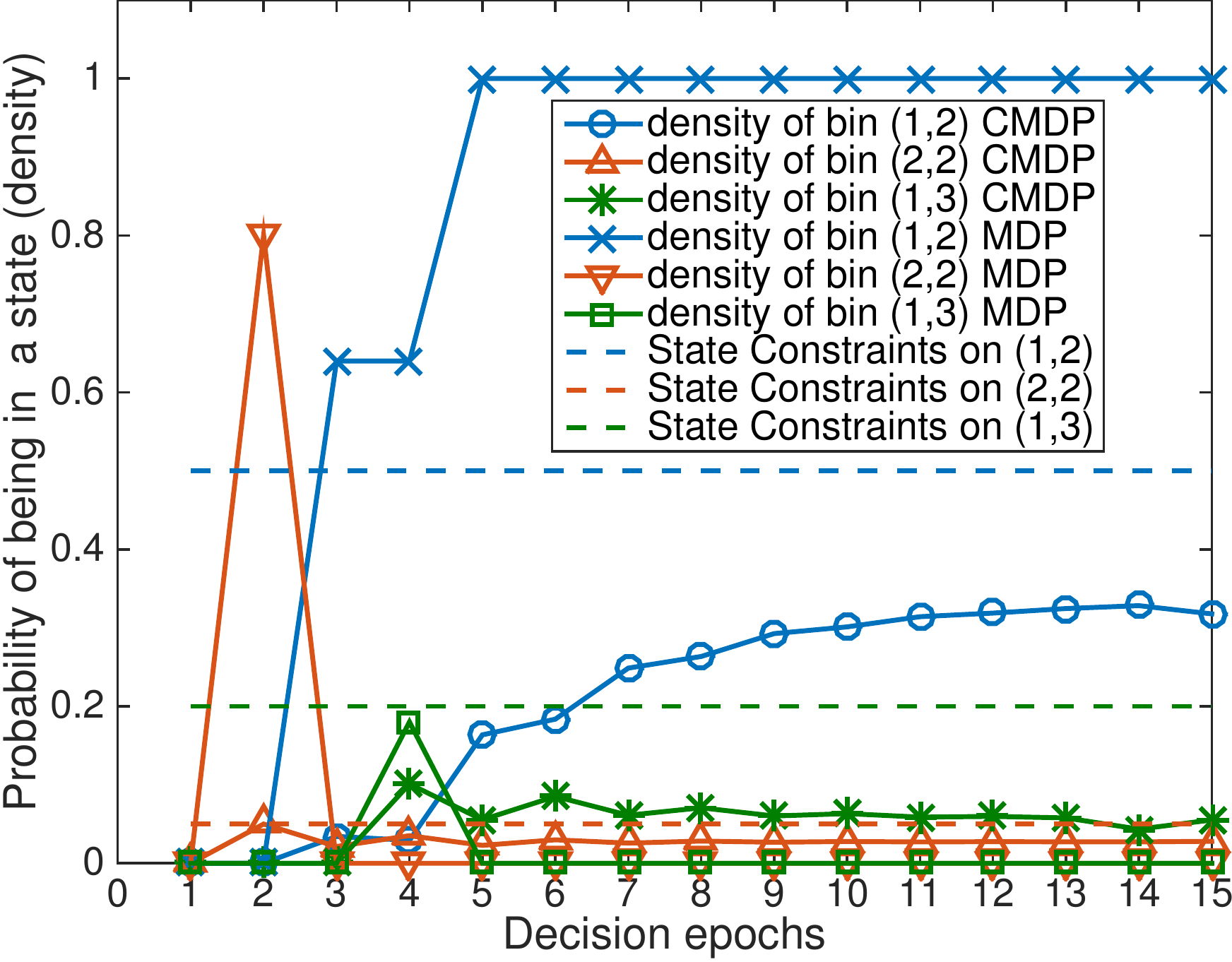}
\caption{The figure shows the density of autonomous vehicles and how the policy for unconstrained MDP can violate the constraints. The density for bin $4$ jumped all the way to 1  after 5 iterations while its maximum capacity is 0.5. The synthesized CMDP policy    obeys the constraints while giving a lower bound guarantee on the expected  reward.
} 
\label{constraints}
\end{center}
\end{figure}

In figure \ref{rewards} we show the reward of the constrained MDP policy  and we compare it to the unfeasible policy of unconstrained MDP. It turns out that in this scenario, the added heuristic generated by the proposed methods in this paper could achieve closer reward to the maximum possible reward without constraints. The constrained MDP curve in crossed line (yellow) is the lower bound derived by Theorem $\ref{thm:1}$ which providing optimality guarantees for the LP generated policy.

\begin{figure}
\begin{center}
\includegraphics[scale=0.4]{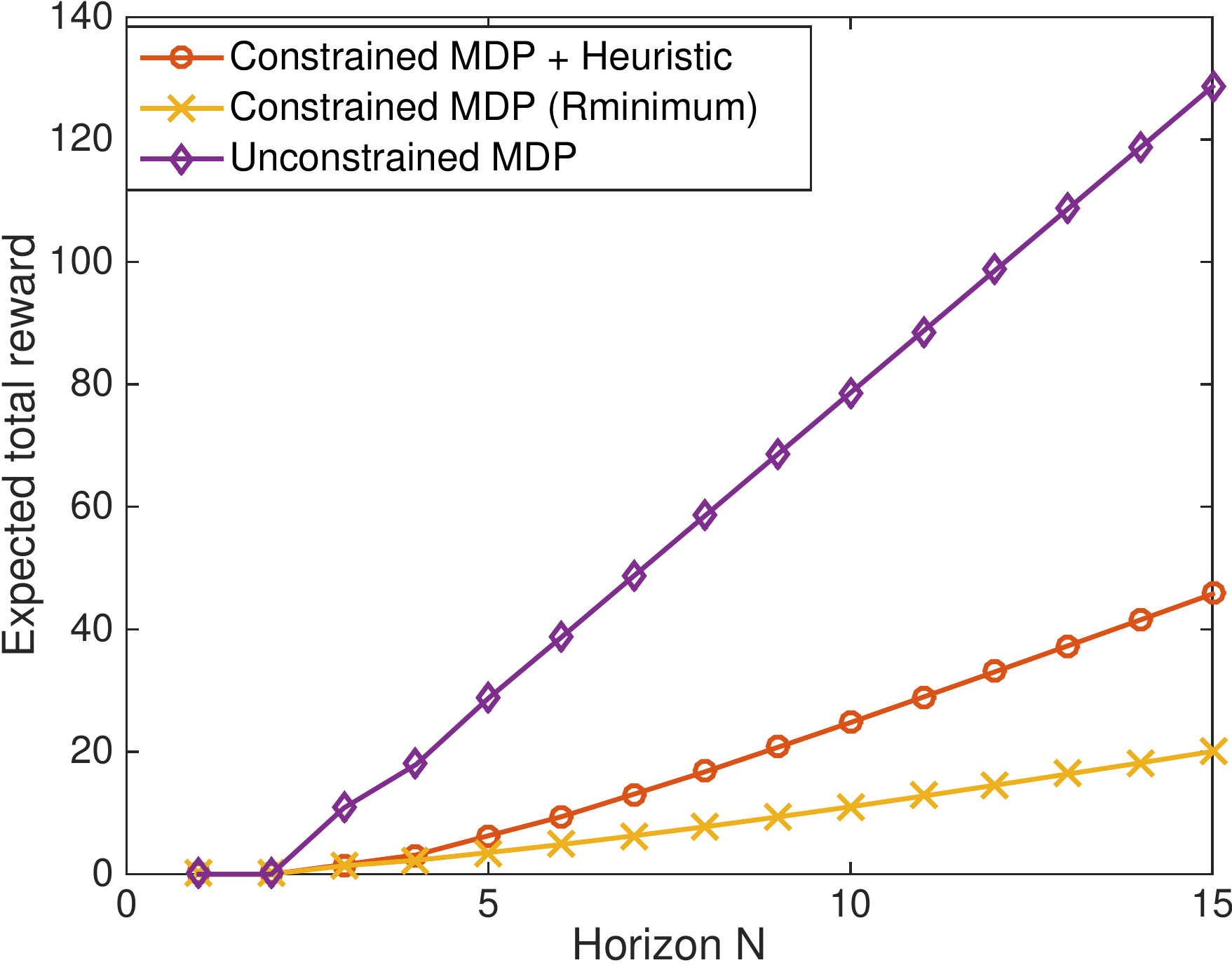}
\caption{ The curve corresponding to the unconstrained MDP is the total expected reward for the optimal MDP  policy without considering the state constraints. Of course the policy is unfeasible and cannot be used when constraints are present. The constrained MDP is the reward  corresponding to the policy computed by the linear program \eqref{eq:LP} (it is the computed lower bound on the total expected reward). The constrained MDP plus heuristic is the further enhancement obtained  by projecting the optimal deterministic MDP on the set of feasible policies for CMDP with reward guarantees. 
} 
\label{rewards}
\vspace{-10pt}
\end{center}
\end{figure}
\section{Conclusion}
In this paper, we have studied finite-state finite-horizon MDP problems with state  constraints. It is  shown that policies due to unconstrained MDP algorithms are not feasible and we propose an efficient algorithm based on linear programming and duality theory to generate feasible Markovian policies that not only satisfy the constraints, but also provide some guarantees on the expected reward. This new policy defines a probability distribution over possible actions and requires that agents randomize their actions depending on the state. 
In the absence of constraints, the proposed method retrieves back the optimal standard MDP policies.
For future work, we would like to extend the proposed policy for the infinite-horizon case using a similar algorithm as the ``value iteration'' of standard MDP problems.

\bibliographystyle{./IEEEtran}
\bibliography{./CMDP_bib,./IEEEabrv}

\begin{thebibliography}{10}
\providecommand{\url}[1]{#1}
\csname url@samestyle\endcsname
\providecommand{\newblock}{\relax}
\providecommand{\bibinfo}[2]{#2}
\providecommand{\BIBentrySTDinterwordspacing}{\spaceskip=0pt\relax}
\providecommand{\BIBentryALTinterwordstretchfactor}{4}
\providecommand{\BIBentryALTinterwordspacing}{\spaceskip=\fontdimen2\font plus
\BIBentryALTinterwordstretchfactor\fontdimen3\font minus
  \fontdimen4\font\relax}
\providecommand{\BIBforeignlanguage}[2]{{%
\expandafter\ifx\csname l@#1\endcsname\relax
\typeout{** WARNING: IEEEtran.bst: No hyphenation pattern has been}%
\typeout{** loaded for the language `#1'. Using the pattern for}%
\typeout{** the default language instead.}%
\else
\language=\csname l@#1\endcsname
\fi
#2}}
\providecommand{\BIBdecl}{\relax}
\BIBdecl

\bibitem{Parkes:2003Anm}
D.~C. Parkes and S.~Singh, ``An {MDP}-based approach to {Online Mechanism
  Design},'' in \emph{Proc. 17th Annual Conf. on Neural Information Processing
  Systems (NIPS'03)}, 2003.

\bibitem{Dolgov:2006Res}
D.~A. Dolgov and E.~H. Durfee, ``Resource allocation among agents with
  mdp-induced preferences,'' \emph{Journal of Artificial Intelligence Research
  (JAIR-06)}, vol.~27, pp. 505--549, December 2006.

\bibitem{Doshi:2004Dyn}
P.~Doshi, R.~Goodwin, R.~Akkiraju, and K.~Verma, ``Dynamic workflow composition
  using markov decision processes,'' in \emph{IEEE Int. Conference on Web
  Services 2004}, July 2004, pp. 576--582.

\bibitem{sutton1998introduction}
R.~S. Sutton and A.~G. Barto, \emph{Introduction to reinforcement
  learning}.\hskip 1em plus 0.5em minus 0.4em\relax MIT Press, 1998.

\bibitem{szepesvari2010}
C.~Szepesv{\'a}ri, ``Algorithms for reinforcement learning,'' \emph{Synthesis
  Lectures on Artificial Intelligence and Machine Learning}, vol.~4, no.~1, pp.
  1--103, 2010.

\bibitem{feinberg2002handbook}
E.~Feinberg and A.~Shwartz, \emph{Handbook of Markov Decision Processes:
  Methods and Applications}, ser. International Series in Operations Research
  \& Management Science.\hskip 1em plus 0.5em minus 0.4em\relax Springer US,
  2002.

\bibitem{altman:inria-00072663}
E.~Altman, ``{Applications of Markov Decision Processes in Communication
  Networks : a Survey},'' INRIA, Research Report RR-3984, 2000.

\bibitem{Puterman:1994Mar}
M.~L. Puterman, \emph{Markov decision processes : discrete stochastic dynamic
  programming}, ser. Wiley series in probability and mathematical
  statistics.\hskip 1em plus 0.5em minus 0.4em\relax New York: John Wiley \&
  Sons, 1994.

\bibitem{Boyd:2004:CO:993483}
S.~Boyd and L.~Vandenberghe, \emph{Convex Optimization}.\hskip 1em plus 0.5em
  minus 0.4em\relax New York, NY, USA: Cambridge University Press, 2004.

\bibitem{Dolgov:2005Sta}
D.~Dolgov and E.~Durfee, ``Stationary deterministic policies for constrained
  mdps with multiple rewards, costs, and discount factors,'' in
  \emph{Proceedings of the 19th International Joint Conference on Artificial
  Intelligence}, ser. IJCAI'05.\hskip 1em plus 0.5em minus 0.4em\relax San
  Francisco, CA, USA: Morgan Kaufmann Publishers Inc., 2005, pp. 1326--1331.

\bibitem{behcet_tac15}
B.~{A\c c\i kme\c se}, N.~Demir, and M.~Harris, ``Convex necessary and
  sufficient conditions for density safety constraints in {M}arkov chain
  synthesis,'' \emph{In press, {IEEE} Trans. on Automatic Control}, 2015.

\bibitem{altman1999constrained}
E.~Altman, \emph{Constrained Markov Decision Processes}, ser. Stochastic
  Modeling Series.\hskip 1em plus 0.5em minus 0.4em\relax Taylor \& Francis,
  1999.

\bibitem{ari_03}
A.~Arapostathis, R.~Kumar, and S.~Tangirala, ``Controlled {M}arkov chains with
  safety upper bound,'' \emph{IEEE Transactions on Automatic Control}, vol.~48,
  no.~7, pp. 1230--1234, 2003.

\bibitem{ari_acc06}
S.~Hsu, A.~Arapostathis, and R.~Kumar, ``On optimal control of {M}arkov chains
  with safety constraint,'' \emph{In Proc. of American Control Conference}, pp.
  4516--4521, 2006.

\bibitem{nazli_acc14}
N.~Demir, B.~{A\c c\i kme\c se}, and M.~Harris, ``Convex optimization
  formulation of density upper bound constraints in {M}arkov chain synthesis,''
  \emph{{American Control Conference}}, pp. 483-- 488, 2014.

\bibitem{Blackmore:2010Apr}
L.~Blackmore, M.~Ono, A.~Bektassov, and B.~Williams, ``A probabilistic
  particle-control approximation of chance-constrained stochastic predictive
  control,'' \emph{IEEE Transactions on Robotics}, vol.~26, no.~3, pp.
  502--517, June 2010.

\bibitem{Feyzabadi:2014Ris}
S.~Feyzabadi and S.~Carpin, ``Risk-aware path planning using hirerachical
  constrained markov decision processes,'' in \emph{Automation Science and
  Engineering (CASE), 2014 IEEE International Conference on}, Aug 2014, pp.
  297--303.

\bibitem{Dana:2013}
C.~A. Rothkopf and D.~H. Ballard, ``\BIBforeignlanguage{English}{Modular
  inverse reinforcement learning for visuomotor behavior},''
  \emph{\BIBforeignlanguage{English}{Biological Cybernetics}}, vol. 107, no.~4,
  pp. 477--490, 2013.

\bibitem{Geibel:2006Rei}
P.~Geibel, ``\BIBforeignlanguage{English}{Reinforcement learning for mdps with
  constraints},'' in \emph{\BIBforeignlanguage{English}{Machine Learning: ECML
  2006}}, ser. Lecture Notes in Computer Science, J.~Fürnkranz, T.~Scheffer,
  and M.~Spiliopoulou, Eds.\hskip 1em plus 0.5em minus 0.4em\relax Springer
  Berlin Heidelberg, 2006, vol. 4212, pp. 646--653.

\bibitem{Elchamie:2014New}
M.~El~Chamie and G.~Neglia, ``Newton's method for constrained norm minimization
  and its application to weighted graph problems,'' in \emph{American Control
  Conference (ACC), 2014}, June 2014, pp. 2983--2988.

\bibitem{Feinberg:1995Con}
E.~A. Feinberg and A.~Shwartz, ``{Constrained Markov Decision Models with
  Weighted Discounted Rewards},'' \emph{Mathematics of Operations Research},
  vol.~20, no.~2, pp. 302--320, 1995.

\bibitem{Haviv:1996Onc}
M.~Haviv, ``On constrained markov decision processes,'' \emph{Operations
  Research Letters}, vol.~19, no.~1, pp. 25 -- 28, 1996.

\bibitem{Nain:1986Opt}
P.~Nain and K.~Ross, ``Optimal priority assignment with hard constraint,''
  \emph{IEEE Transactions on Automatic Control}, vol.~31, no.~10, pp. 883--888,
  Oct 1986.

\bibitem{Frid:1972Ono}
E.~B. Frid, ``On optimal strategies in control problems with constraints,''
  \emph{Theory of Probability \& Its Applications}, vol.~17, no.~1, pp.
  188--192, 1972.

\bibitem{Altman:1991Mdp}
E.~Altman and A.~Shwartz, ``Markov decision problems and state-action
  frequencies,'' \emph{SIAM J. Control Optim.}, vol.~29, no.~4, pp. 786--809,
  Jul. 1991.

\bibitem{Hordijk:1984Con}
A.~Hordijk and L.~C.~M. Kallenberg, ``Constrained undiscounted stochastic
  dynamic programming,'' \emph{Mathematics of Operations Research}, vol.~9,
  no.~2, pp. 276--289, 1984.

\bibitem{Altman:1995The}
E.~Altman and F.~Spieksma, ``\BIBforeignlanguage{English}{The linear program
  approach in multi-chain markov decision processes revisited},''
  \emph{\BIBforeignlanguage{English}{Zeitschrift für Operations Research}},
  vol.~42, no.~2, pp. 169--188, 1995.

\bibitem{Beutler:1986Tim}
F.~J. Beutler and K.~W. Ross, ``\BIBforeignlanguage{English}{Time-average
  optimal constrained semi-markov decision processes},''
  \emph{\BIBforeignlanguage{English}{Advances in Applied Probability}},
  vol.~18, no.~2, pp. pp. 341--359, 1986.

\bibitem{Feinberg:1996Con}
E.~A. Feinberg and A.~Shwartz, ``\BIBforeignlanguage{English}{Constrained
  discounted dynamic programming},''
  \emph{\BIBforeignlanguage{English}{Mathematics of Operations Research}},
  vol.~21, no.~4, pp. pp. 922--945, 1996.

\bibitem{Lee:2014Ext}
I.~Lee, M.~A. Epelman, H.~E. Romeijn, and R.~L. Smith, ``Extreme point
  characterization of constrained nonstationary infinite-horizon markov
  decision processes with finite state space,'' \emph{Operations Research
  Letters}, vol.~42, no.~3, pp. 238 -- 245, 2014.

\bibitem{Bertsekas:2005Dyn}
D.~P. Bertsekas, \emph{Dynamic Programming and Optimal Control, Vol.I},
  3rd~ed.\hskip 1em plus 0.5em minus 0.4em\relax Athena Scientific, 2005.

\bibitem{Acikmese:2012Ama}
B.~Acikmese and D.~Bayard, ``A markov chain approach to probabilistic swarm
  guidance,'' in \emph{American Control Conference (ACC), 2012}, June 2012, pp.
  6300--6307.

\end{thebibliography}
\end{document}